\newcommand{\defeq}{\coloneqq}
\newcommand{\IN}{\mathbb N}
\newcommand{\IZ}{\mathbb Z}
\newcommand{\IR}{\mathbb R}
\newcommand{\IQ}{\mathbb Q}
\newcommand{\Ra}{\Rightarrow}
\newcommand{\w}{\omega}
\newtheorem{theorem}{Theorem}
\newtheorem{lemma}{Lemma}
\newtheorem{claim}{Claim}
\newtheorem{proposition}{Proposition}
\theoremstyle{definition}
\newtheorem{definition}[theorem]{Definition}
\title{Midconvex sets in Abelian groups}
\author{Iryna Banakh, Taras Banakh, Maria Kolinko, Alex Ravsky}
\address{I.~Banakh, A.~Ravsky: Institute for Applied Problems of Mechanics and Mathematics of National Academy of Sciences of Ukraine,  Naukova 3b, Lviv, Ukraine}
\email{ibanakh@yahoo.com, alexander.ravsky@uni-wuerzburg.de}
\address{T.~Banakh, M.~Kolinko: Faculty of Mechanics and Mathematics, Ivan Franko National University of Lviv, Ukraine}
\email{t.o.banakh@gmail.com, marybus20@gmail.com}
\subjclass[2020]{05E16; 20K99; 52A01}
\begin{document}
\begin{abstract} A subset $X$ of an Abelian group $G$ is called {\em midconvex} if for every $x,y\in X$ the set $\frac{x+y}2=\{z\in G:2z=x+y\}$ is a subset of $X$. We prove that a subset $X$ of an Abelian group $G$ is midconvex if and only if for every $g\in G$ and $x\in X$, the set $\{n\in\IZ:x+ng\in X\}$ is equal to $C\cap H$ for some order-convex set $C\subseteq \IZ$ and some subgroup $H\subseteq \IZ$ such that the quotient group $\IZ/H$ has no elements of even order. This characterization implies that  a subset $X$ of a periodic Abelian group $G$ is midconvex if and only if for every $x\in X$ the set $X-x$ is a subgroup of $G$ such that every element of the quotient group $G/(X-x)$ has odd order. Also we prove that a nonempty set $X$ in a subgroup $G\subseteq\IQ$ is midconvex if and only if $X=C\cap(H+x)$ for some order-convex set $C\subseteq\IQ$, some $x\in X$ and some subgroup $H$ of $G$ such that the quotient group $G/H$ contains no elements of even order. 
\end{abstract}
\maketitle

In this paper we study midconvex subsets of Abelian groups. Such subsets often appear in the theory of Functional Equations \cite[\S11]{BJ}, \cite[\S10]{BO10}, \cite{BO17}, \cite[p.111]{Kuczma} and Combinatorics of Groups \cite{BBKR}.

{\em All groups in this paper are assumed to be Abelian}.

\begin{definition} A subset $X$ of a group $G$ is called {\em midconvex} if for every $x,y\in X$, the set $$\frac{x+y}2\defeq\{z\in G:2z=x+y\}$$ is a subset of $X$.
\end{definition}

The aim of this paper is to characterize midconvex sets in groups that belong to certain special classes, in particular, in periodic groups and also in subgroups of the additive group of rational numbers. 

 Let $G$ be a subgroup of the real line $\IR$.
 A subset $X\subseteq G$ is called {\em order-convex in} $G$ if for every $x,y\in X$, the order interval set $\{g\in G:x\le g\le y\}$ is a subset of $X$.

\begin{lemma}\label{l:1} If a subset $X$ of a group $G$ is midconvex, then for every $x,y\in X$, the set $$Z\defeq\{n\in\IZ:x+n(y-x)\in X\}$$ is order-convex in $\IZ$.
\end{lemma}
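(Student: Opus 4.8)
The plan is to reduce the statement to a purely combinatorial fact about subsets of $\IZ$, extracting from midconvexity only what is strictly needed. Write $g\defeq y-x$, so that $Z=\{n\in\IZ:x+ng\in X\}$, and observe first that $0,1\in Z$, since $x=x+0\cdot g$ and $y=x+g$ both lie in $X$. The sole consequence of midconvexity I will use is the \emph{parity-midpoint closure} of $Z$: if $p,q\in Z$ and $p\equiv q\pmod 2$, then $\frac{p+q}2\in Z$. Indeed $x+pg,x+qg\in X$, the element $x+\frac{p+q}2g$ is one of the $z\in G$ with $2z=(x+pg)+(x+qg)$, and midconvexity of $X$ puts every such $z$ into $X$. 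Thus it suffices to prove: every subset $Z\subseteq\IZ$ which contains $\{0,1\}$ and is closed under parity-midpoints is order-convex.

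Suppose such a $Z$ is not order-convex. Then $Z$ has a \emph{gap}: there are $a,c\in Z$ with $a<c$, $c-a\ge2$, and $(a,c)\cap Z=\emptyset$. Parity-midpoint closure forbids the length $c-a$ of a gap from being even, since otherwise $\frac{a+c}2$ would be an element of $Z$ strictly between $a$ and $c$; hence every gap has odd length $\ge3$. This is the heart of the matter: an even gap dies under a single bisection, whereas an odd gap cannot be bisected within $\IZ$, and parity-midpoint closure \emph{by itself} does not imply order-convexity (for instance $\{0,3\}$ is vacuously parity-midpoint closed). The extra ingredient that saves the day is that $Z$ contains the \emph{consecutive} pair $\{0,1\}$, which must be transported to the edge of a hypothetical gap.

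To do this, let $R$ be the largest interval of integers with $0\in R\subseteq Z$; since $0,1\in Z$ we have $\{0,1\}\subseteq R$, so if $R=Z$ we are done. Otherwise pick $n^*\in Z\setminus R$. If $n^*>0$, then $R$ is bounded above (otherwise $[0,n^*]\subseteq Z$, which would put $n^*\in R$); set $a\defeq\max R$. Then $a-1,a\in Z$ (both lie in $R\supseteq[0,a]$, the inequality $a\ge1$ coming from $1\in R$), while $a+1\notin Z$ (else $[0,a+1]\subseteq Z$ would force $a+1\in R$); hence $c\defeq\min\{n\in Z:n>a\}$ exists, with $c\ge a+2$ and $(a,c)\cap Z=\emptyset$. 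Since this is a gap, $c-a=2m+1$ with $m\ge1$, so $a-1\equiv c\pmod2$, and parity-midpoint closure yields $\frac{(a-1)+c}2=a+m\in Z$; but $a<a+m<c$, contradicting that $(a,c)$ is a gap. If instead $n^*<0$, the argument is symmetric: $R$ is bounded below, $c\defeq\min R$ satisfies $c,c+1\in Z$ and $c-1\notin Z$, the set $\{n\in Z:n<c\}$ has a maximum $a$ with $c\ge a+2$ and $(a,c)\cap Z=\emptyset$, and writing $c-a=2m+1$ gives $\frac{a+(c+1)}2=a+m+1\in(a,c)\cap Z$, again impossible. (Alternatively, this second case follows from the first applied to $1-Z$, obtained by interchanging $x$ and $y$.) Therefore $Z=R$ is order-convex, which proves the lemma. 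The step I expect to be the main obstacle is exactly the odd-gap case just handled: finding the right way to use the consecutive pair $\{0,1\}$, rather than midpoint closure alone, to produce a point inside the gap.
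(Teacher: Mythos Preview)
Your proof is correct. Both you and the paper begin from $\{0,1\}\subseteq Z$ and the parity-midpoint property, and both split into a positive and a negative case; the organisation of the contradiction, however, is different. The paper picks the extremal $n\notin Z$ nearest to $\{0,1\}$ and then argues by induction on $i\ge n$ (resp.\ $i\le n$) that \emph{no} integer on the far side of $n$ lies in $Z$, at each step bisecting $i$ against a suitable $a\in\{n-2,n-1\}$; this eventually contradicts the existence of $m\in Z$ beyond $n$. You instead isolate the gap in one move: take the maximal interval $R\supseteq\{0,1\}$ inside $Z$, let $a$ be its relevant endpoint and $c$ the nearest element of $Z$ beyond it, observe that the gap length $c-a$ must be odd, and finish with a \emph{single} parity-midpoint between $c$ and the neighbour $a-1$ (or between $a$ and $c+1$), which lands strictly inside $(a,c)$. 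Your route is shorter and induction-free; the paper's version trades that economy for a step-by-step argument that never needs to name the far endpoint $c$ in advance.
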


\begin{proof} It follows from $\{x,y\}\subseteq X$ that $\{0,1\}\subseteq Z$. Assuming that $Z$ is not order-convex, we can find integer numbers $k<n<m$ such that $k,m\in Z$ and $n\notin Z$. Two cases are possible.
\smallskip

1. If $n>0$, then we can assume that $n$ is the smallest positive number such that $n\notin Z$.  The minimality of $n$ ensures that $\{0,1\}\subseteq \{0,\dots,n-1\}\subseteq Z$. By induction, we shall prove that $\{n,\dots,i\}\cap Z=\emptyset$ for every integer $i\ge n$. For $i=n$, this follows from the choice of $n$. Assume that for some $i>n$ we know that $\{n,\dots,i-1\}\cap Z=\emptyset$. Let $a\in\{n-2,n-1\}\subseteq Z$ be a unique  number such that $a+i$ is even and hence $a+i=2c$ for some $c\in\IZ$. Assuming that $i\in Z$, we obtain that $\{x+a(y-x),x+i(y-x)\}\subseteq X$. Since $2(x+c(y-x))=2x+(a+i)(y-x)=(x+a(y-x))+(x+i(y-x))$, the point $x+c(y-x)$ is an element of the midconvex set $X$ and hence $c\in Z$. On the other hand, $c=\frac12(a+i)<i$ and $c=\frac12(a+i)\ge \frac12(n-2+n+1)=n-\frac12$ and hence $c\in\{n,\dots,i-1\}\subseteq \IZ\setminus Z$, which is a desired contradiction showing that $i\in Z$. By the Principle of Mathematical Induction, $i\notin Z$ for all $i>n$. In particular, $m\notin Z$, which contradicts the choice of $m$.
\smallskip

2. If $n<0$,   then we can assume that $n$ is the largest negative number such that $n\notin Z$.  The maximality of $n$ and the inclusion $\{0,1\}\subseteq Z$ ensure that $\{n+1,n+2\}\subseteq Z$. By downward induction, we shall prove that $\{i,\dots,n\}\cap Z=\emptyset$  for every integer $i\le n$. For $i=n$ this follows from the choice of $n$. Assume that for some $i<n$ we know that $\{i+1,\dots,n\}\cap Z=\emptyset$. Let $a\in\{n+1,n+2\}\subseteq Z$ be a unique  number such that $a+i$ is even and hence $a+i=2c$ for some $c\in\IZ$. Assuming that $i\in Z$, we obtain that $\{x+i(y-x),x+a(y-x)\}\subseteq X$. Since $2(x+c(y-x))=2x+(i+a)(y-x)=(x+i(y-x))+(x+a(y-x))$, the point $x+c(y-x)$ is an element of the midconvex set $X$ and hence $c\in Z$. On the other hand, $c=\frac{i+a}2>i$ and $c=\frac{i+a}2\le \frac12(n-1+n+2)=n+\frac12$ and hence $c\in\{i+1,\dots,n\}\subseteq \IZ\setminus Z$, which is a desired contradiction showing that $i\in Z$. By the Principle of Mathematical Induction, $i\notin Z$ for all $i<n$. In particular, $k\notin Z$, which contradicts the choice of $k$.
\end{proof}

Now we can prove the promised  characterization of midconvex sets in groups. We recall that the {\em order} of an element $g$ of a group $G$ is the smallest number $n\in\IN$ such that $ng=0$. If $ng\ne 0$ for all $n\in\IN$, then we say that $g$ has infinite order.

\begin{theorem}\label{t:1} A subset $X$ of a group $G$ is midconvex if and only if for every $g\in G$ and $x\in X$, the set $\{n\in\IZ:x+ng\in X\}$ is equal to $C\cap H$ for some order-convex set $C\subseteq \IZ$ and some subgroup $H\subseteq \IZ$ such that the quotient group $\IZ/H$ has no elements of even order.
\end{theorem}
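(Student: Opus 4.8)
I would prove the two implications separately, the ``if'' part being short.

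For the ``if'' part, suppose the stated condition holds and take $x,y\in X$ together with $z\in G$ satisfying $2z=x+y$; I must show $z\in X$. Put $g\defeq z-x$, so that $x+0\cdot g=x\in X$ and $x+2g=2z-x=y\in X$; hence $0$ and $2$ lie in $Z\defeq\{n\in\IZ:x+ng\in X\}$. Writing $Z=C\cap H$ as in the hypothesis, from $0,2\in C$ and order-convexity I get $1\in C$, while $0,2\in H$ forces $H$ to be a nonzero subgroup $m\IZ$ with $m\mid 2$; since $\IZ/m\IZ$ has no element of even order, $m$ is odd, so $m=1$ and $H=\IZ$. Therefore $1\in C\cap H=Z$, that is, $z=x+g\in X$, and $X$ is midconvex.

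For the ``only if'' part, fix $g\in G$ and $x\in X$ and set $Z\defeq\{n\in\IZ:x+ng\in X\}$. The identity $2(x+cg)=(x+ag)+(x+bg)$ whenever $a+b=2c$ shows at once that $Z$ is a midconvex subset of the group $\IZ$ containing $0$. So it suffices to prove the following statement about $\IZ$: \emph{every midconvex $Z\subseteq\IZ$ with $0\in Z$ equals $C\cap H$, where $C$ is the order-convex hull of $Z$ in $\IZ$ and $H\defeq\langle Z\rangle$, and $\IZ/H$ has no element of even order.} The inclusion $Z\subseteq C\cap H$ and the case $Z=\{0\}$ are trivial, so assume $Z\ne\{0\}$ and write $H=d\IZ$ with $d=\gcd(Z\setminus\{0\})\ge 1$. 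First I would check that $d$ is odd: otherwise every $n\in Z$ is even, so taking midpoints with $0$ gives $\tfrac12 n\in Z$ for all $n\in Z$, whence inductively $2^k\mid n$ for all $k$ and all $n\in Z$, forcing $Z=\{0\}$; consequently $\IZ/d\IZ$ has only odd-order elements. Since $d$ is odd, a routine verification shows that $W\defeq\tfrac1d Z$ is again midconvex, contains $0$, has $\gcd(W\setminus\{0\})=1$, and satisfies $C\cap d\IZ=d\cdot(\text{order-convex hull of }W)$; thus the whole problem reduces to: \emph{a midconvex $W\subseteq\IZ$ with $0\in W$ and $\gcd(W\setminus\{0\})=1$ is order-convex.}

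If $1\in W$, then applying Lemma~\ref{l:1} in the group $\IZ$ to the pair $0,1$ says precisely that $\{n\in\IZ:0+n(1-0)\in W\}=W$ is order-convex; the case $-1\in W$ follows by symmetry. So everything comes down to ruling out $\pm1\notin W$. Assume $\pm1\notin W$; replacing $W$ by $-W$ if necessary, I may assume $W$ has a positive element, and let $p$ be the least one, so $p\ge 2$. Taking the midpoint of $0$ and $p$ shows $p$ is odd, hence $p\ge 3$. An induction on $K\ge 1$ (base case immediate from minimality of $p$) then shows that $W\cap[0,Kp]=\{0,p,\dots,Kp\}$ as long as $Kp\le\sup W$: in the step, if $q$ is the least element of $W$ exceeding $Kp$, one forms the midpoint of $q$ with whichever of $(K-1)p,Kp$ has the same parity as $q$ (these two differ by the odd number $p$, so exactly one matches), observes it lies in $((K-1)p,q)\cap W=\{Kp\}$, hence equals $Kp$, and concludes $q=(K+1)p$. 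So the nonnegative part of $W$ is an initial segment $\{0,p,2p,\dots\}$ of $p\IN$, and symmetrically the nonpositive part is $\{0,-p',-2p',\dots\}$ with $p'\ge 3$ odd; moreover $W$ does have negative elements, for otherwise $W\setminus\{0\}\subseteq p\IZ$ and $\gcd(W\setminus\{0\})\ge p\ge 3$. Then $\gcd(W\setminus\{0\})=\gcd(p,p')=1$ forces $p\ne p'$, but the midpoint of $p\in W$ and $-p'\in W$ is an integer (since $p-p'$ is even) lying strictly between $-p'$ and $0$ or strictly between $0$ and $p$, hence outside $W$ by the description just obtained --- a contradiction. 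Therefore $\pm1\in W$, which finishes the proof. I expect the main obstacle to be exactly this last step: extracting from midconvexity enough rigidity to force $\pm1\in W$ once $\gcd(W\setminus\{0\})=1$; everything preceding it is the reduction to $\IZ$ and routine bookkeeping.
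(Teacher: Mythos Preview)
Your proof is correct and uses the same core strategy as the paper: pass to the midconvex set $Z\ni 0$ in $\IZ$, show that the subgroup $H$ it generates has odd index, and invoke Lemma~\ref{l:1} to get order-convexity along the generator. The only difference is organizational. The paper picks $m$ to be a nonzero element of $Z$ with $|m|$ minimal (shows $m$ is odd by the midpoint with $0$), applies Lemma~\ref{l:1} directly to the pair $x,\,x+mg$ to make $S=Z\cap m\IZ$ order-convex in $m\IZ$, and then proves $Z\subseteq m\IZ$ by a short minimal-counterexample argument: for $k\in Z\setminus(C\cap m\IZ)$ of least $|k|$, take the midpoint with $0$ if $k$ is even and with $m$ if $k$ is odd, landing back in $C\cap m\IZ$ and forcing $k\in m\IZ$. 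You instead set $d=\gcd(Z\setminus\{0\})$, rescale to $W=\tfrac1d Z$, and then must argue that $\pm1\in W$; your structural description of $W\cap\IZ_{\ge0}$ and $W\cap\IZ_{\le0}$ as arithmetic progressions with odd steps $p,p'$ and the final contradiction from $\gcd(p,p')=1$ accomplishes exactly this, and is equivalent to the paper's step $Z\subseteq m\IZ$ (since $\pm1\in W$ forces $d$ to be the minimal $|m|$). One cosmetic point in your induction: if it were $Kp$ that matched the parity of $q$, the midpoint $(q+Kp)/2=Kp$ would give $q=Kp$, contradicting $q>Kp$; so it is always $(K-1)p$ that matches, whence $q=(K+1)p$ as you claim.
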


\begin{proof} First assume that the set $X$ is not midconvex in $G$. Then there exist points $x,y\in X$ and $g\in G\setminus X$ such that $2z=x+y$. Consider the set $Z\defeq\{n\in\IZ:x+ng\in X\}$ and observe that $\{0,2\}\subseteq Z$ and $1\notin Z$. Assume that $Z=C\cap H$ for some order-convex set $C$ in $\IZ$ and some subgroup $H$ of $\IZ$ such that the quotient group $\IZ/H$ has no elements of even order. It follows from $\{0,2\}\subseteq Z\subseteq H$ that $2\IZ\subseteq H$. Since $\IZ/H$ has no elements of even order, $H\ne 2\IZ$ and hence $H=\IZ$. Since $\{0,2\}\subseteq C\cap H=C$, the order-convexity of $C$ implies that $1\in C=C\cap H=Z$, which is a desired contradiction completing the proof of the ``if'' part.
\smallskip

To prove the ``only if'' part, assume that the set $X\subseteq G$ is midconvex and take any $x\in X$ and $g\in G$. If $Z\defeq\{n\in\IZ:x+ng\in X\}=\{0\}$, then $Z=C\cap H$ for the order-convex set $C\defeq\{0\}$ and the subgroup $H\defeq\IZ$. The quotient group $\IZ/H$ is trivial and hence has no elements of even order. So, assume that $Z\ne \{0\}$. In this case there exists a nonzero integer number $m\in Z$. We can assume that its absolute value $|m|$ is the smallest possible, that is $i\notin Z$ for all $i\in\IZ$ with $0<|i|<|m|$. Assuming that $m$ is even, consider the element $z=x+\frac{m}2g\in G$ and observe that $m\in Z$ implies $x+mg\in X$. Since $2z=2x+mg=x+(x+mg)$, the midconvexity of $X$ guarantees that $z\in X$ and hence $\frac m2\in Z$, which contradicts the choice of $m$. This contradiction shows that $m$ is odd. Then for the subgroup $H\defeq m\IZ$, the quotient group $\IZ/H$ contains no elements of even order. By Lemma~\ref{l:1}, the set $\{n\in\IZ:x+nmg\in X\}$ is order-convex in $\IZ$ and hence the set $S\defeq \{n\in H:x+ng\in X\}$ is order-convex in $H$. Then there exists an order-convex set $C$ in $\IZ$ such that $S=C\cap H$. It remains to show that $Z=C\cap H$. It is clear that $C\cap H=\{n\in H:x+ng\in X\}\subseteq\{n\in\IZ:x+ng\in X\}=Z$. Assuming that $Z\ne C\cap H$, we can find a number $k\in Z\setminus C\cap H$. We can assume that $|k|$ is the smallest possible, and hence every $i\in Z$ with $|i|<|k|$ belongs to $C\cap H$. If $k$ is even, then $\{x,x+kg\}\subseteq  X$ and the midconvexity of $X$ implies $x+\frac k2g\in Z$ and hence $\frac k2\in H$ by the minimality of $k$. Then $k\in H$  and $k\in S=C\cap H$, which contradicts the choice of $k$. This contradiction shows that $k$ is odd. The choice of  $|m|$ ensures  that $|m|<|k|$. Since the numbers $m$ and $k$ are odd, the number $\frac{m+k}2$ is integer and $|\frac{m+k}2|<|k|$. The midconvexity of $Z\ni m,k$ implies that $\frac{m+k}2\in Z$ and hence $\frac{m+k}2\in C\cap H$ by the choice of $k$. Then $k=2\frac{m+k}2-m\in H$ and hence $k\in S=C\cap H$, which contradicts the choice of $k$. In both cases we obtain a contradiction showing that $Z=C\cap H$.
\end{proof}

A group $G$ is called {\em periodic} if for every $g\in G$ there exists $n\in\IN$ such that $ng=0$. 

\begin{theorem}\label{t:2} A subset $X$ of a periodic group $G$ is midconvex if and only if for every $x\in X$ the set $X-x$ is a subgroup of $G$ such that every element of the quotient group $G/(X-x)$ has odd order.
\end{theorem}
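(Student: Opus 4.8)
The plan is to derive Theorem~\ref{t:2} from Lemma~\ref{l:1} (or, equivalently, from Theorem~\ref{t:1}) together with two direct applications of midconvexity, exploiting the fact that in a periodic group every element has finite order.

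For the ``only if'' part, assume $X$ is midconvex; if $X=\emptyset$ there is nothing to prove, so fix $x\in X$ and put $L\defeq X-x$. The crucial first step is to show that whenever $g\in G$ satisfies $x+g\in X$, in fact $x+ng\in X$ for \emph{all} $n\in\IZ$. Indeed, applying Lemma~\ref{l:1} to $x$ and $y\defeq x+g$, the set $Z\defeq\{n\in\IZ:x+ng\in X\}$ is order-convex in $\IZ$ and contains $0$ and $1$. Since $G$ is periodic, $g$ has a finite order $d\ge 1$, so $x+(n+d)g=x+ng$ and hence $Z$ is invariant under $n\mapsto n+d$; therefore $d\IZ\subseteq Z$, and order-convexity forces $Z=\IZ$. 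In particular $-g\in L$, and trivially $0\in L$. For closure under addition, take $g_1,g_2\in L$; by the step just proved $x+2g_1,x+2g_2\in X$, and since $2(x+g_1+g_2)=(x+2g_1)+(x+2g_2)$, midconvexity yields $x+g_1+g_2\in X$, i.e.\ $g_1+g_2\in L$. Hence $L$ is a subgroup of $G$.

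Next I would check that every element of $G/L$ has odd order. Fix $g\in G$; since $G$ is periodic the coset $g+L$ has a finite order $k$, and if $k=2j$ were even then $2jg\in L$ but $jg\notin L$. However $2jg\in L$ gives $x+2jg\in X$, and applying midconvexity to $x$ and $x+2jg$ (note $2(x+jg)=x+(x+2jg)$) we get $x+jg\in X$, i.e.\ $jg\in L$ --- a contradiction. So $k$ is odd, completing the ``only if'' direction.

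Conversely, suppose that for every $x\in X$ the set $X-x$ is a subgroup of $G$ such that every element of $G/(X-x)$ has odd order (periodicity of $G$ is not needed here). Given $x,y\in X$ and $z\in G$ with $2z=x+y$, set $L\defeq X-x$; then $y-x\in L$ and $2(z-x)=y-x\in L$, so the order of the coset $z-x+L$ in $G/L$ divides $2$ and is odd, hence equals $1$, i.e.\ $z-x\in L$ and $z\in X$. Thus $\frac{x+y}2\subseteq X$ and $X$ is midconvex. The one genuinely delicate point is the first step of the ``only if'' part --- passing from a single membership $x+g\in X$ to the whole orbit $x+\IZ g\subseteq X$ --- and this is precisely where finiteness of the order of $g$ (i.e.\ periodicity) enters; everything afterwards is a short midpoint manipulation.
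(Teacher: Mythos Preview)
Your proof is correct and follows essentially the same route as the paper: both use Lemma~\ref{l:1} (the paper via Theorem~\ref{t:1}, which immediately reduces to Lemma~\ref{l:1} since $\{0,1\}\subseteq Z$ forces $H=\IZ$) together with periodicity to conclude $Z=\IZ$, then apply the midpoint identity $2(x+g_1+g_2)=(x+2g_1)+(x+2g_2)$ for additive closure, and the same order-$2$ argument for the quotient and the converse. The only cosmetic difference is that you extract inverses directly from $Z=\IZ$, while the paper observes that a subsemigroup of a periodic group is a subgroup.
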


\begin{proof} To prove the ``only if'' part, assume that the subset $X$ is midconvex in $G$. Fix any element $x\in X$.

\begin{claim}\label{cl:2a} For every $a\in X-x$ we have $2a\in X-x$.
\end{claim}

\begin{proof} By Theorem~\ref{t:1}, for the set $Z\defeq\{n\in\IZ:x+na\in X\}$ there exists an order-convex set $C$ in $\IZ$ and a subgroup $H$ of $\IZ$ such that $Z=C\cap H$ and the quotient group $\IZ/H$ has no elements of even order. It follows from $x\in X$ and $a\in X- x$ that $\{0,1\}\subseteq Z=C\cap H\subseteq H$ and hence $H=\IZ$ and $Z=C\cap H=C$. Since the group $G$ is periodic, there exists $m\in\IN$ such that $ma=0$ and hence  $Z=Z+m$ and $C=C+m$. The order-convexity of $C=C+m$ implies $C=\IZ$ and hence $Z=C=\IZ$ and $2\in Z$, which implies $x+2a\in X$ and $2a\in X-x$.
\end{proof}

\begin{claim} The set $X-a$ is a subgroup of $G$.
\end{claim}

\begin{proof} Given any elements $a,b\in X-a$, we have $\{2a,2b\}\subseteq X-x$, according to Claim~\ref{cl:2a} and then $a+b+x=\frac12((2a+x)+(2b+x))\in X$ by the midconvexity of $X$. Since $a+b\in X-x$, the set $X-x$ is a subsemigroup of $G$. Since $G$ is periodic, the semigroup $X-x\subseteq G$ is a subgroup  of $G$.
\end{proof}

Assuming that some element of the quotient group $G/(X-x)$ has even order, we can find an element $y\in G\setminus(X-x)$ such that $2y\in X-x$. By the midconvexity of $X$, $y+x=\frac{x+(2y+x)}2\in X$ and hence $y\in X-x$, which contradicts the choice of $y$.
This completes the proof of the ``only if'' part of Theorem~\ref{t:2}.
\smallskip

To prove the ``if'' part, assume that for every $x\in X$ the set $X-x$ is a subgroup of $G$ such that the quotient group $G/(X-x)$ contains no elements of even order. To derive a contradiction, assume that the set $X$ is not midconvex in $G$. Then there exist elements $x,y\in X$ and $z\in G\setminus X$ such that $2z=x+y$. By our assumption, $X-x$ is a subgroup such that the quotient group $G/(X-x)$ has no elements of even order. Since $2(z-x)=y-x\in X-x$, the element $(z-x)+(X-x)$ of the quotient group $G/(X-x)$ has order at most 2. Since this order is not even, $z-x$ has order $1$ and hence belongs to the subgroup $X-x$. Therefore, $z-x\in X-x$, which contradicts the choice of $z$.
\end{proof}

Our final theorem describes midconvex sets in subgroups of the ordered group $\IQ$.

\begin{theorem}\label{t:3} Let $G$ be a subgroup of $\IQ$. A nonempty set $ X\subseteq G$ is midconvex if and only if $X=C\cap(H+x)$ for some order-convex set $C\subseteq\IQ$, some $x\in X$ and some subgroup $H$ of $G$ such that the quotient group $G/H$ contains no elements of even order. 
\end{theorem}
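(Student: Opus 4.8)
\emph{Proof plan.} First I would dispatch the ``if'' direction by a direct check. Assume $X=C\cap(H+x)$ with $C,x,H$ as in the statement, and let $a,b\in X$, $z\in G$ satisfy $2z=a+b$. As $\IQ$ is linearly ordered, $z=\tfrac{a+b}2$ lies between $a$ and $b$, so $z\in C$ by order-convexity; and from $a-x,b-x\in H$ we get $2(z-x)=(a-x)+(b-x)\in H$, so the coset $z-x+H$ has order dividing $2$ in $G/H$, hence order $1$ since $G/H$ has no element of even order; thus $z-x\in H$, and $z\in C\cap(H+x)=X$, proving $X$ midconvex.

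For the ``only if'' part I would fix $x\in X$, put $Y\defeq X-x$ (a midconvex set with $0\in Y$), let $H\defeq\langle Y\rangle\subseteq G$ be the subgroup generated by $Y$, and let $C\defeq\{q\in\IQ:\exists\,a,b\in Y,\ a\le q\le b\}$ be the order-convex hull of $Y$ in $\IQ$. Then $C$ is order-convex in $\IQ$, $H$ is a subgroup of $G$, and $Y\subseteq C\cap H$, so the task reduces to \textbf{(1)} $G/H$ has no element of even order and \textbf{(2)} $C\cap H\subseteq Y$; indeed $X=(C+x)\cap(H+x)$ is then the required representation. For \textbf{(1)} it is enough to show $2g\in H\Rightarrow g\in H$ for $g\in G$ (this rules out elements of order $2$, hence of even order). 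Write $2g=\sum_{i=1}^r n_iy_i$ with $n_i\in\IZ$, $y_i\in Y\setminus\{0\}$, and let $L\defeq\langle g,y_1,\dots,y_r\rangle$; as a finitely generated subgroup of $\IQ$ it is cyclic, $L=\IZ t$ with $t$ its least positive element, and we write $g=kt$, $y_i=m_it$ ($k,m_i\in\IZ$, $m_i\ne 0$). Whenever some $m_i$ is even, $(m_i/2)t=\tfrac{0+y_i}2\in L\subseteq G$ lies in $Y$ by midconvexity, so replacing $y_i$ by $(m_i/2)t$ and $n_i$ by $2n_i$ preserves $2g=\sum n_iy_i$; iterating (which terminates), we may assume every $m_i$ is odd. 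Then $d\defeq\gcd(m_1,\dots,m_r)$ is odd, and $2k=\sum n_im_i$ gives $d\mid 2k$, hence $d\mid k$; by B\'ezout $dt=\sum_ic_iy_i\in\langle Y\rangle=H$ for suitable $c_i\in\IZ$, so $g=(k/d)(dt)\in H$.

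For \textbf{(2)} I would take $z\in C\cap H$, choose $a,b\in Y$ with $a\le z\le b$, write $z=\sum_{i=1}^rn_iy_i$ with $y_i\in Y$, $n_i\in\IZ$, and put $L\defeq\langle a,b,y_1,\dots,y_r\rangle=\IZ t$ ($t$ its least positive element), writing $a=\alpha t$, $b=\beta t$, $z=\zeta t$, $y_i=m_it$. Then $z\in L$ (it lies in $\langle y_1,\dots,y_r\rangle$), and $\alpha,\beta,m_1,\dots,m_r$ have gcd $1$ because $t$ generates $L$. Applying Theorem~\ref{t:1} to $G$, the element $t\in G$ and the point $x$, the set $Z_t\defeq\{n\in\IZ:x+nt\in X\}=\{n\in\IZ:nt\in Y\}$ equals $C_t\cap H_t$ with $C_t$ order-convex in $\IZ$ and $\IZ/H_t$ having no element of even order. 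Since $\alpha,\beta,m_1,\dots,m_r\in Z_t\subseteq H_t$ and these integers generate $\IZ$, necessarily $H_t=\IZ$, so $Z_t=C_t$ is order-convex in $\IZ$. Finally $\zeta=\sum n_im_i\in\IZ$, and $a\le z\le b$ gives $\alpha\le\zeta\le\beta$ (as $t>0$); since $\alpha,\beta\in C_t$ and $C_t$ is order-convex, $\zeta\in C_t=Z_t$, i.e.\ $z=\zeta t\in Y$, as desired.

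I expect the crux to be the last step of \textbf{(2)}: recognizing that in the cyclic group $L=\IZ t$ the ``modulus'' delivered by Theorem~\ref{t:1} must be trivial. This hinges on choosing $L$ to be \emph{generated by elements of $Y$} (namely $a,b,y_1,\dots,y_r$), so that their integer coordinates have no common factor and hence force $H_t=\IZ$; once this is seen, $z\in Y$ is pure order-convexity. A secondary point requiring care is the halving in \textbf{(1)}: it is legitimate because, after passing to $L=\IZ t$, each half $(m_i/2)t$ automatically lies in $G$, so midconvexity applies — a reduction that sidesteps having to treat $2$-divisible and non-$2$-divisible groups $G$ separately.
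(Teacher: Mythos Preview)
Your argument is correct, and the ``if'' direction coincides with the paper's. For the ``only if'' direction you take a genuinely different route. The paper exhausts $G$ by an increasing chain of finitely generated (hence cyclic) subgroups $G_n\ni x$, applies Theorem~\ref{t:1} inside each $G_n$ to obtain $X\cap G_n=C_n\cap(H_n+x)$, proves the compatibility $C_n\subseteq C_{n+1}$ and $H_n\subseteq H_{n+1}$, and then sets $C=\bigcup_n C_n$, $H=\bigcup_n H_n$. You instead \emph{name} the candidates up front, $H=\langle X-x\rangle$ and $C$ the order-convex hull of $X-x$ in $\IQ$, and verify the two required properties by passing, for each individual element, to a cyclic subgroup generated by finitely many witnesses from $X-x$; Theorem~\ref{t:1} is invoked only in step~(2), and the halving trick in step~(1) is a self-contained substitute for it. What your approach buys is an explicit description of $H$ and $C$ (the minimal ones) and no need for a global filtration or compatibility claims; what the paper's approach buys is that the verification at each level is a straight transcription of Theorem~\ref{t:1}, with the only work being the monotonicity of $(C_n)$ and $(H_n)$. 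Both reduce the substance to the cyclic case, just organized differently.
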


\begin{proof} To prove the ``if'' part, assume that $X=C\cap(H+x)$ for some order-convex set $C\subseteq\IQ$, some $x\in X$ and some subgroup $H$ of $G$ such that the quotient group $G/H$ contains no elements of even order. To see that $X$ is mid-convex, take any elements $a,b\in X$ and $c\in\frac{a+b}2\subseteq G$. We lose no generality assuming that $a\le b$ and hence $a\le c\le b$. Since $a,b\in X\subseteq C$, the order convexity of $C$ ensures that $c\in C$. Observe that $2(c-x)=(a-x)+(b-x)\in (X-x)+(X-x)\subseteq H+H=H$ and hence the element $(c-x)+H$ of the quotient group $G/H$ has order at most 2. Since the group $G/H$ contains no elements of even order, $c-x\in H$ and hence $c\in C\cap(H+x)=X$, witnessing that $X$ is midconvex.
\smallskip

To prove the ``only if'' part, assume that the nonempty set $X$ is midconvex in $G$. Fix any element $x\in X$.

If $X=\{x\}$ is a singleton, then $X=C\cap (H+x)$ for the order-convex set $C=\{x\}$ and the subgroup $H=G$ whose quotient group $G/H$ is trivial and hence contains no elements of even order. So, we assume that $X$ contain at least two distinct elements. Fix any elements $x,x'\in X$ with $x<x'$.

 Write the countable group $G\subseteq\IQ$ as the union $G=\bigcup_{n\in\w}G_n$ of an increasing sequence $(G_n)_{n\in\w}$ of finitely generated subgroups such that $\{x,x'\}\subseteq G_0$. The finitely generated subgroup $G_n$ of $\IQ$ is contained in a cyclic subgroup of $\IQ$ and hence $G_n$ is cyclic. Then $G_n=g_n\IZ$ for a unique positive rational number $g_n$. 
 
 For every $n\in\w$, the midconvexity of the set $X$ in $G$ implies the midconvexity of the set $X\cap G_n$ in the cyclic group $G_n=g_n\IZ$. By Theorem~\ref{t:1}, there exists an order-convex set $C_n\subseteq g_n\IZ$ and a subgroup $H_n\subseteq G_n$ such that $X\cap G_n=C_n\cap (H_n+x)$ and the quotient group $G_n/H_n$ contains no elements of even order. Since $\{x,x'\}\subseteq X\cap G_n\subseteq H_n+x$, the subgroup $H_n$ of $G_n=g_n\IZ$ is not trivial and hence $H_n=m_ng_n\IZ$ for some odd natural number $m_n$. Since the intersection of an arbitrary family of order-convex sets in the cyclic group $G_n=g_n\IZ$ is order-convex, we can assume that $C_n$ is the smallest order-convex subset of $G_n$ such that $X\cap G_n=C_n\cap(H_n+x)$.  
 
\begin{claim}\label{cl:CH} For every $n\in\w$, $C_n\subseteq  C_{n+1}$ and $H_n\subseteq  H_{n+1}$.
\end{claim}

\begin{proof} Observe that $C_n\cap (H_n+x)=X\cap G_n=(X\cap G_{n+1})\cap G_n=(C_{n+1}\cap (H_{n+1}+x))\cap G_n$. The minimality of the order-convex set $C_n$ ensures that for every $c\in C_n$ there exists $y,z\in C_n\cap X$ such that $y\le c\le z$. Then $y,z\in C_n\cap X\subseteq G_n\cap X\subseteq G_{n+1}\cap X\subseteq C_{n+1}$ and hence $c\in C_{n+1}$ by the order-convexity of $C_{n+1}$. This shows that $C_n\subseteq C_{n+1}$. 

Since $x'\in X\cap G_n= C_n\cap (x+H_n)=C_n\cap (x+m_ng_n\IZ)$ and $x<x'$, there exists $k'_n\in\IN$ such that $x'=x+m_ng_nk_n'\in C_n$. The order convexity of the set $C_n\ni x$ in $G_n$ implies that $x+m_ng_n\in C_n\cap (x+H_n)=X\cap G_n\subseteq X\cap G_{n+1}\subseteq H_{n+1}+x$ and hence $H_n=m_ng_n\IZ\subseteq H_{n+1}$.
\end{proof}

\begin{claim} The set $C=\bigcup_{n\in\w}C_n$ is order-convex in the group $G$.
\end{claim}

\begin{proof} Given any elements $x\le y\le z$ of the group $G=\bigcup_{n\in\w}G_n$ with $x,z\in C=\bigcup_{n\in\w}C_n$, find $m\in\w$ such that $x,y,z\in G_m$ and $x,y\in C_m$. Such  number $m$ exists because $(C_n)_{n\in\w}$ and $(G_n)_{n\in\w}$ are increasing sequences of sets. The order-convexity of the set $C_m$ in the group $G_m$ implies $y\in C_m\subseteq C$, witnessing that the set $C$ is order-convex in $G$.
\end{proof}

By Claim~\ref{cl:CH}, the sequence of subgroups $(H_n)_{n\in\w}$ is increasing and hence $H=\bigcup_{n\in\w}H_n$ is a subgroup of the group $G$.

\begin{claim} The quotient group $G/H$ has no elements of even order.
\end{claim}

\begin{proof} If the group $G/H$ has an element of even order, then it has an element of order 2. Consequently, there exists an element $g\in G\setminus H$ such that $2g\in H$. Find $n\in\w$ such that $g\in G_n$ and $2g\in H_n$. It follows from $H_n\subseteq H$ and $g\in G\setminus H$ that $g\in G_n\setminus H_n$ and hence $g+H_n$ is an element of order $2$ in the quotient group $G_n/H_n$, which contradicts the choice of the groups $G_n$ and $H_n$.
\end{proof}

Taking into account that $(C_n)_{n\in\w}$ and $(H_n)_{n\in\w}$ are increasing sequences of sets, we obtain that $$X=\bigcup_{n\in\w}X\cap G_n=\bigcup_{n\in\w}C_n\cap(H_n+x)=C\cap (H+x),$$
which completes the proof of Theorem~\ref{t:3}.
\end{proof}

\end{document}